\theoremstyle{plain}
\newtheorem{theorem}{Theorem}[section]
\newtheorem{lemma}{Lemma}[section]
\theoremstyle{definition}
\newtheorem{example}{Example}[section]
\begin{document}

\title{On the residual of a factorized group with widely supersoluble factors}

\author{Victor S. Monakhov and Alexander A. Trofimuk\\
{\small Department of Mathematics and Programming Technologies,}\\
{\small Francisk Skorina Gomel State University,}\\
{\small Gomel 246019, Belarus}\\
{\small e-mail: victor.monakhov@gmail.com}\\
{\small e-mail: alexander.trofimuk@gmail.com}{\small $^{\dag}$ }
}

\date{}

\maketitle

{\bf Abstract.}
Let $\Bbb P$ be the set of all primes. A subgroup  $H$ of a group $G$ is called {\it $\mathbb P$-subnormal} in $G$, if either $H=G$, or there exists a chain of subgroups $H=H_0\le H_1\le \ldots \le H_n=G, \ |H_{i}:H_{i-1}|\in \Bbb P, \ \forall i.$ A group $G$ is called {\it widely supersoluble}, $\mathrm{w}$-supersoluble for short,
if every Sylow subgroup of $G$ is $\mathbb P$-subnormal in $G$. A group $G=AB$ with $\mathbb P$-subnormal $\mathrm{w}$-supersoluble subgroups $A$ and $B$ is studied.
The structure of its $\mathrm{w}$-supersoluble residual is obtained. In particular, it coincides with the nilpotent residual of the $\mathcal{A}$-residual of $G$. Here $\mathcal{A}$ is the formation of
all groups with abelian Sylow subgroups. Besides, we obtain new sufficient conditions for the $\mathrm{w}$-supersolubility of such group $G$.

\medskip

{\bf Keywords.}
widely supersoluble groups, mutually $sn$-permutable subgroups, $\mathbb P$-subnormal subgroup,
the $\frak X$-residual.

\medskip

{\bf Mathematics Subject Classification.}
20D10, 20D20.

\bigskip

\section*{Introduction}

Throughout this paper, all groups are finite and $G$ always denotes a finite group. We use the standard notations and terminology of~\cite{Hup}. The formations of all nilpotent, supersoluble  groups and groups with abelian Sylow subgroups are denoted
by $\mathfrak N$, $\mathfrak U$ and $\mathcal{A}$, respectively. The notation $Y\le X$
means that $Y$ is a subgroup of a group $X$ and $\mathbb P$~be the set of all primes. Let $\mathfrak X$ be a formation.
Then $G^{\mathfrak X}$ denotes the $\mathfrak
X$-residual of $G$.

By Huppert's Theorem~\cite[VI.9.5]{Hup},  a group $G$ is supersoluble if and only if for every proper subgroup $H$ of $G$ there exists a chain of subgroups
$$
H=H_0\le H_1\le \ldots \le H_n=G, \ |H_{i}:H_{i-1}|\in \Bbb P, \ \forall i.\eqno (1)
$$
So naturally the following definition.

A subgroup $H$ of a group $G$ is called $\mathbb P$-{\it subnormal}
in~$G$, if either~${H=G}$, or there is a chain subgroups~(1). We use the notation $H\mathbb ~Psn~G$.
This definition was proposed in~\cite{VVTSMJ10} and besides, in this paper  {\it $\mathrm{w}$-supersoluble} (widely supersoluble) groups, i.e. groups with $\mathbb P$-subnormal Sylow subgroups,  were investigated.
Denote by  $\mathrm{w}\frak U$ the class of all $\mathrm{w}$-supersoluble groups.

The factorizable groups $G=AB$ with $\mathrm{w}$-supersoluble factors $A$ and $B$ were investigated in~\cite{MC2017}, \cite{Mon_Trof_2018}, \cite{MT_Sib2019}, \cite{Vas_Tiut_SMZ_2012}. There are many other papers devoted to study factorizable groups, and the reader is referred to the book~\cite{Bal_Rom_As_2010} and the bibliography therein. A criteria for
$\mathrm{w}$-supersolvability was obtained by
A.\,F. Vasil'ev, T.\,I. Vasil'eva and V.\,N. Tyutyanov~\cite{Vas_Tiut_SMZ_2012}.

{\bf Theorem A.} \cite[Theorem 4.7]{Vas_Tiut_SMZ_2012} {\it Let $G = AB $ be a group which is the product of two $\mathrm{w}$-supersoluble subgroups~$A$ and~$B$. If $A$ and $B$ are $\Bbb P$-subnormal in $G$
and $G^\mathcal{A}$ is nilpotent, then $G$ is $\mathrm{w}$-supersoluble.}

We recall that two subgroups $A$ and $B$ of a group $G$ are said to be
{\it mutually $sn$-permutable} if $A$ permutes with all subnormal subgroups of $B$ and
$B$ permutes with all subnormal subgroups of $A$.
If $A$ and $B$ are mutually  $sn$-permutable subgroups of a group $G = AB$, then we say that $G$ is a {\it mutually
$sn$-permutable product} of $A$ and $B$, see~\cite{Carocca1998}.
In soluble groups, mutually $sn$-permutable factors are $\mathbb P$-subnormal~\cite[Lemma~4.5]{Vas_Tiut_SMZ_2012}.
The converse is not true, see the example~\ref{ex5} below.

A. Ballester-Bolinches, W.\,M.~Fakieh and M.\,C.~Pedraza-Aguilera~\cite{BalFakPed2019}  obtained the following results for the $sn$-permutable product  of the $\mathrm{w}$-supersoluble subgroups.

{\bf Theorem B.} {\it
Let $G=AB$ be the mutually $sn$-permutable product of subgroups $A$ and~$B$.
Then the following hold:

$(1)$ if  $A$ and $B$ are $\mathrm{w}$-supersoluble and
$N$~is a minimal normal subgroup of~$G$, then both $AN$ and $BN$ are $\mathrm{w}$-supersoluble,~\emph{\cite[Theorem~3]{BalFakPed2019}};

$(2)$ if  $A$ and $B$ are $\mathrm{w}$-supersoluble and
$(|A/A^\mathcal{A}|, |B/B^\mathcal{A}|) = 1$, then $G$ is
$\mathrm{w}$-supersoluble,~\emph{\cite[Theorem~5]{BalFakPed2019}}.
}

Present paper extends the Theorems~A and B.
We prove the following result.

{\bf Theorem 1.} {\it
Let $A$ and $B$~be  $\mathrm{w}$-supersoluble $\Bbb P$-subnormal subgroups of~$G$ and $G=AB$.  Then the following hold:

$(1)$ $G^{\mathrm{w}\frak U}=(G^\mathcal{A})^\frak N$;

$(2)$ if $N$~is a nilpotent  normal subgroup of~$G$, then both $AN$ and $BN$ are $\mathrm{w}$-supersoluble;

$(3)$ if $(|A/A^\mathcal{A}|, |B/B^\mathcal{A}|) = 1$, then $G$ is $\mathrm{w}$-supersoluble.
}

Theorem~A follows from assertion~(1) of Theorem~1. Theorem~B follows from assertions~(2) and~(3) of Theorem~1 since the group $G$ in Theorem~B is soluble.
%We give an example showing that assertion~(2) of Theorem~B  does not apply to groups with $\Bbb P$-subnormal factors.

\section{Preliminaries}\label{pre}

In this section, we give some definitions and basic results which
are essential in the sequel.
A group whose chief factors have prime orders is called {\it supersoluble}.
Recall that a {\it $p$-closed} group is a group with a normal Sylow $p$-subgroup and a {\it $p$-nilpotent} group is a group with a normal Hall $p^{\prime}$-subgroup.

Denote by $G^\prime $, $Z(G)$, $F(G)$ and $\Phi (G)$  the derived subgroup, centre, Fitting and Frattini  subgroups of~$G$ respectively.
We use~$E_{p^t}$  to denote an elementary abelian group of order~$p^t$ and $Z_m$ to denote a cyclic group of order~$m$. The semidirect product of a normal subgroup~$A$ and a subgroup~$B$ is written as follows: $A\rtimes B$.

Let $\mathfrak F$ be a formation.
Recall that the $\mathfrak F$-residual of $G$, that is the intersection of all those normal
subgroups $N$ of $G$ for which $G/N \in \mathfrak F$.
We define $\mathfrak X\mathfrak Y =\{G \in \mathfrak E \mid G^{\mathfrak Y}\in
\mathfrak X\}$ and call $\mathfrak X \mathfrak Y$ the
{\it formation product} of $\mathfrak X$ and $\mathfrak H$.
Here $\mathfrak E$ is the class of all finite groups.

If $H$ is a subgroup of $G$, then $H_G=\bigcap _{x\in G} H^x$ is called {\it the core} of $H$ in $G$.
If a group $G$ contains a maximal subgroup $M$ with trivial core, then $G$ is said to be {\it primitive} and $M$ is its {\it primitivator}.

A simple check proves the following lemma.

\begin{lemma} \label{l333} Let $\mathfrak{F}$ be a saturated formation and $G$ be a group. Assume that $G \notin\mathfrak{F}$, but $G/N \in \mathfrak{F}$ for all non-trivial normal subgroups $N$ of $G$. Then $G$ is a primitive group.
\end{lemma}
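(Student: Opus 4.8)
The plan is to exploit the two closure properties available: every formation is closed under subdirect products, and a \emph{saturated} formation additionally satisfies the Gasch\"utz property $G/\Phi(G)\in\mathfrak F\Rightarrow G\in\mathfrak F$. First I would observe that $G\neq 1$ (the trivial group lies in $\mathfrak F$ while $G$ does not), so $G$ possesses at least one minimal normal subgroup.

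Next I would show $\Phi(G)=1$. If $\Phi(G)\neq 1$, then $G/\Phi(G)\in\mathfrak F$ by hypothesis, and since $\mathfrak F$ is saturated this yields $G\in\mathfrak F$, a contradiction. Hence $\Phi(G)=1$, and in particular, being the intersection of all maximal subgroups, $\Phi(G)$ cannot contain any non-trivial normal subgroup. I would then show that $G$ is monolithic. Suppose $N_1\neq N_2$ were two distinct minimal normal subgroups of $G$; then $N_1\cap N_2=1$, so $G\cong G/(N_1\cap N_2)$ embeds as a subdirect product in $G/N_1\times G/N_2$, whose factors both lie in $\mathfrak F$ by hypothesis. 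Closure of the formation $\mathfrak F$ under subdirect products forces $G\in\mathfrak F$, a contradiction. Therefore $G$ has a unique minimal normal subgroup $N$, which is then contained in every non-trivial normal subgroup of $G$.

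Finally, since $\Phi(G)=1$ and $N\neq 1$, there is a maximal subgroup $M$ of $G$ with $N\not\le M$. Its core $M_G$ is a normal subgroup of $G$; if $M_G\neq 1$ then $N\le M_G\le M$, contradicting the choice of $M$. Hence $M_G=1$, so $M$ is a maximal subgroup with trivial core and $G$ is primitive with primitivator $M$. The argument is essentially bookkeeping with these two closure properties; the only point demanding care is invoking the correct one at each step --- subdirect-product closure for a general formation to get monolithicity, and the Frattini property for a saturated formation to kill $\Phi(G)$.
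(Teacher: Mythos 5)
Your proof is correct, and it is the standard argument; the paper itself offers no proof (it merely remarks that ``a simple check'' suffices), so there is nothing to diverge from. All three steps check out: saturation kills $\Phi(G)$, closure of a formation under subdirect products forces a unique minimal normal subgroup $N$, and a maximal subgroup avoiding $N$ (which exists since $\Phi(G)=1$) must have trivial core. The only implicit assumptions are the usual ones --- $G$ is finite (so minimal normal subgroups exist) and $\mathfrak F$ is non-empty (so $1\in\mathfrak F$ and $G\neq 1$) --- both harmless in the paper's context.
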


\begin{lemma} \label{l444} \emph{(\cite[Theorem II.3.2]{Hup})} Let  $G$ be a soluble primitive group and  $M$ is a primitivator of $G$. Then the following statements hold:

$(1)$ $\Phi (G)=1$;

$(2)$ $F(G)=C_G(F(G))=O_p(G)$ and  $F(G)$  is an elementary abelian subgroup of order~$p^n$ for some prime $p$ and some positive integer~$n$;

$(3)$ $G$ contains a unique minimal normal subgroup $N$ and moreover, $N=F(G)$;

$(4)$ $G=F(G)\rtimes M$ and $O_p(M)=1$.
\end{lemma}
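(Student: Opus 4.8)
\textit{Proof proposal.} This lemma is the classical structure theorem for soluble primitive groups, and the plan is simply to reproduce the standard argument (as in~\cite[II.3.2]{Hup}) from the basic theory of the Fitting and Frattini subgroups of soluble groups. The engine throughout is the following triviality: a normal subgroup of $G$ that is normalised by the primitivator $M$ and meets $M$ trivially must be contained in $M_G = 1$.

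For (1), recall that $\Phi(G)$ is contained in every maximal subgroup of $G$, hence $\Phi(G) \le M$, and being normal $\Phi(G) \le M_G = 1$. For (2) and (3), I would first analyse an arbitrary minimal normal subgroup $N$ of $G$: since $G$ is soluble, $N$ is an elementary abelian $p$-group for some prime $p$, and $N \not\le M$ (otherwise $N \le M_G = 1$), so maximality of $M$ forces $G = NM$; as both $M$ and the abelian group $N$ normalise $N \cap M$, it is normal in $G = NM$ and therefore trivial, so $N$ complements $M$ and $|N| = |G:M|$. If $N_1 \ne N_2$ were two minimal normal subgroups, then $[N_1,N_2] \le N_1 \cap N_2 = 1$, so $N_1N_2$ is abelian and normal with $N_1N_2 \cap M \trianglelefteq G$, hence trivial, which forces $|G:M| \ge |N_1|\,|N_2| > |G:M|$, a contradiction; thus $G$ has a unique minimal normal subgroup $N$. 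Since $\Phi(G) = 1$ and $G$ is soluble, Gaschütz's theorem gives that $F(G)$ is the direct product of the minimal normal subgroups of $G$, so $F(G) = N$ is elementary abelian of order $p^n$, and $F(G) \le O_p(G) \le F(G)$ yields $F(G) = O_p(G)$. Finally $C_G(F(G)) = F(G)$, because $C_G(F(G)) \le F(G)$ holds in every soluble group while $F(G)$, being abelian, lies in its own centraliser.

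Assertion (4) is then immediate: $G = NM = F(G) \rtimes M$ by the computation above, and if $Q = O_p(M)$ then $F(G)Q$ is a $p$-subgroup normalised by $M$ and by the abelian subgroup $F(G)$, hence normal in $G = F(G)M$, so $F(G)Q \le O_p(G) = F(G)$ and therefore $O_p(M) \le F(G) \cap M = 1$. I do not expect a genuine obstacle here: everything is elementary group theory except for two standard facts that may simply be quoted from~\cite{Hup}, namely Gaschütz's description of $F(G)$ for a soluble group with trivial Frattini subgroup and the inclusion $C_G(F(G)) \le F(G)$ for soluble groups; the only point requiring a little care is the uniqueness of the minimal normal subgroup, which is where the counting argument above is used.
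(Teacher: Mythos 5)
The paper gives no proof of this lemma --- it is quoted verbatim from Huppert (\cite[II.3.2]{Hup}) --- and your reconstruction is the standard textbook argument, correctly executed: the core-triviality trick for $\Phi(G)$ and for $N\cap M$, the counting argument for uniqueness of the minimal normal subgroup, and the identification $F(G)=N=O_p(G)=C_G(F(G))$ via Gasch\"utz and the soluble-group inclusion $C_G(F(G))\le F(G)$. No gaps; this matches what the cited source does.
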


%\medskip

\begin{lemma} \emph{(\cite[Proposition~2.2.8, Proposition~2.2.11]{Bal_Clas})} \label{l3}
Let $\mathfrak F$ and $\mathfrak H$  be formations, $K$ be normal in~$G$. Then the following hold:

$(1)$ $(G/K)^{\mathfrak F}=G^{\mathfrak F}K/K$;

$(2)$ $G^{\mathfrak F\mathfrak H}=(G^\mathfrak H)^\mathfrak F$;

$(3)$ if $\mathfrak H\subseteq \mathfrak F$, then
$G^\mathfrak F\le G^\mathfrak H$;

$(4)$ if $G=HK$, then $H^\mathfrak FK=G^\mathfrak FK$.
\end{lemma}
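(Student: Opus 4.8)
The plan is to deduce all four identities from one fundamental observation: because a formation $\mathfrak F$ is closed under homomorphic images and under finite subdirect products, the residual $G^{\mathfrak F}$ is the \emph{smallest} normal subgroup of $G$ with quotient in $\mathfrak F$. Equivalently, for a normal subgroup $N$ of $G$ one has $G/N\in\mathfrak F$ if and only if $G^{\mathfrak F}\le N$. I would establish this characterization at the outset, since each of the four parts rests on it.

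For $(1)$ I would use the correspondence $N\leftrightarrow N/K$ between normal subgroups of $G$ containing $K$ and normal subgroups of $G/K$, together with $(G/K)/(N/K)\cong G/N$. The quotient of $G/K$ by $G^{\mathfrak F}K/K$ is $G/G^{\mathfrak F}K$, an epimorphic image of $G/G^{\mathfrak F}\in\mathfrak F$, hence in $\mathfrak F$; this gives $(G/K)^{\mathfrak F}\le G^{\mathfrak F}K/K$. Conversely, if $K\le N\trianglelefteq G$ with $(G/K)/(N/K)\in\mathfrak F$, then $G/N\in\mathfrak F$, so $G^{\mathfrak F}\le N$ and thus $G^{\mathfrak F}K/K\le N/K$; intersecting over all such $N$ yields the reverse inclusion. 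Part $(3)$ is then immediate: from $\mathfrak H\subseteq\mathfrak F$ we get $G/G^{\mathfrak H}\in\mathfrak H\subseteq\mathfrak F$, and the characterization forces $G^{\mathfrak F}\le G^{\mathfrak H}$.

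For $(2)$, granting the standard fact that the formation product $\mathfrak F\mathfrak H$ is again a formation, I would set $R=(G^{\mathfrak H})^{\mathfrak F}$, a characteristic subgroup of $G$. Applying $(1)$, and using $R\le G^{\mathfrak H}$, gives $(G/R)^{\mathfrak H}=G^{\mathfrak H}R/R=G^{\mathfrak H}/(G^{\mathfrak H})^{\mathfrak F}\in\mathfrak F$, so $G/R\in\mathfrak F\mathfrak H$ and hence $G^{\mathfrak F\mathfrak H}\le R$. For the reverse inclusion, writing $M=G^{\mathfrak F\mathfrak H}$, the condition $(G/M)^{\mathfrak H}\in\mathfrak F$ becomes, via $(1)$ and an isomorphism theorem, $G^{\mathfrak H}/(G^{\mathfrak H}\cap M)\in\mathfrak F$, which forces $(G^{\mathfrak H})^{\mathfrak F}\le G^{\mathfrak H}\cap M\le M$.

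For $(4)$, with $K\trianglelefteq G$ and $G=HK$, I would exploit the canonical isomorphism $\phi\colon H/(H\cap K)\to G/K$ sending $h(H\cap K)\mapsto hK$. Since the $\mathfrak F$-residual is preserved by isomorphism, $\phi$ maps $(H/(H\cap K))^{\mathfrak F}$ onto $(G/K)^{\mathfrak F}$; expanding both sides by part $(1)$ and using $(H\cap K)K=K$ turns this equality into $H^{\mathfrak F}K/K=G^{\mathfrak F}K/K$, whence $H^{\mathfrak F}K=G^{\mathfrak F}K$. The only genuinely delicate point in the whole argument is the very first one, namely that $G^{\mathfrak F}$ is the least normal subgroup with quotient in $\mathfrak F$, which depends essentially on closure under subdirect products; everything else is bookkeeping with the isomorphism theorems.
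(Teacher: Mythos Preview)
Your argument is correct in all four parts, and the key observation you isolate---that closure under subdirect products makes $G^{\mathfrak F}$ the unique smallest normal subgroup with quotient in $\mathfrak F$---is exactly what drives the standard proofs. Note, however, that the paper does not prove this lemma at all: it is quoted verbatim from \cite[Proposition~2.2.8, Proposition~2.2.11]{Bal_Clas} and used as a black box, so there is no in-paper proof to compare against. Your treatment is essentially the textbook one found in that reference, so there is no methodological divergence to discuss.
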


Recall that a group $G$ is said to be {\it siding} if every subgroup of the derived subgroup  $G^{\prime}$ is normal in $G$, see \cite[Definition~2.1]{Per}. Metacyclic groups,
t-groups (groups in which every subnormal subgroup is normal) are siding.
The group $G=(Z_6\times Z_2)\rtimes Z_2$  (IdGroup(G)=[24,8]) \cite{Gap} is siding, but not metacyclic and a t-group.

\begin{lemma} \label{l55}
Let $G$ be siding. Then the following hold:

$(1)$ if $N$ is normal in $G$, then $G/N$ is siding;

$(2)$ if $H$ is a subgroup of $G$, then $H$ is siding;

$(3)$ $G$ is supersoluble
\end{lemma}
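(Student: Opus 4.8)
The plan is to prove the three parts of Lemma~\ref{l55} in order, using induction on $|G|$ where convenient and exploiting the defining property that every subgroup of $G'$ is normal in $G$.

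\textbf{Part (1).} Let $N\trianglelefteq G$ and write $\overline{G}=G/N$. The derived subgroup of $\overline{G}$ is $\overline{G}'=G'N/N$. I would take an arbitrary subgroup $\overline{H}\le \overline{G}'$; it has the form $\overline{H}=H/N$ for some $H$ with $N\le H\le G'N$. The obstacle here is that $H$ need not lie inside $G'$, so I cannot directly invoke the siding hypothesis on $H$. To get around this, I would use that $G'N/N$ is generated by the images of elements of $G'$, and more carefully: set $H_0=H\cap G'$. Then $H=H_0N$ because $H\le G'N$ (a standard Dedekind-modular-law computation: $H=H\cap G'N=(H\cap G')N=H_0N$ using $N\le H$). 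Since $H_0\le G'$, the siding property gives $H_0\trianglelefteq G$, hence $H=H_0N\trianglelefteq G$, and therefore $\overline{H}=H/N\trianglelefteq G/N=\overline{G}$. Thus $G/N$ is siding.

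\textbf{Part (2).} Let $H\le G$. Then $H'\le G'$ and $H'\le H$, so $H'\le H\cap G'$. For any subgroup $U\le H'$ we have $U\le G'$, so $U\trianglelefteq G$ by the siding hypothesis, and in particular $U\trianglelefteq H$. Hence every subgroup of $H'$ is normal in $H$, i.e.\ $H$ is siding.

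\textbf{Part (3).} This is the substantive part. By parts (1) and (2) the class of siding groups is closed under subgroups and quotients, so if $G$ is siding but not supersoluble I may pass to a minimal counterexample and assume every proper subgroup and every proper quotient of $G$ is supersoluble. First I would observe that $G$ is soluble: every subgroup of $G'$ is normal in $G$, so every subgroup of $G'$ is normal in $G'$, which forces $G'$ to be a group all of whose subgroups are normal (a Dedekind group), hence nilpotent; a group with nilpotent derived subgroup is soluble. Now a minimal non-supersoluble soluble group is, by Lemma~\ref{l333} applied to the saturated formation $\mathfrak U$, a primitive group, so by Lemma~\ref{l444} it has a unique minimal normal subgroup $N=F(G)=O_p(G)=C_G(F(G))$ elementary abelian of order $p^n$, with $G=N\rtimes M$ and $M$ supersoluble. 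Since $G$ is not supersoluble, $n\ge 2$. The key point is that $N\le G'$: indeed $G/G'$ is abelian, hence supersoluble, so $G^{\mathfrak U}\le G'$, and since $G$ is not supersoluble $G^{\mathfrak U}\ne 1$, so $G^{\mathfrak U}$ is a nontrivial normal subgroup and contains the unique minimal normal subgroup $N$; thus $N\le G^{\mathfrak U}\le G'$. But then every subgroup of $N$ is normal in $G$ by the siding hypothesis. Taking a subgroup $N_1\le N$ of order $p$ (possible since $n\ge 1$), $N_1\trianglelefteq G$ is a nontrivial normal subgroup properly contained in $N$ (as $n\ge 2$), contradicting the uniqueness/minimality of $N$. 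This contradiction shows no counterexample exists, so $G$ is supersoluble.

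The main obstacle is the quotient-closure in part (1), where the preimage subgroup can stick out of $G'$; the fix is the modular-law identity $H=(H\cap G')N$. In part (3) the crux is recognizing that the siding condition, when applied to the minimal normal subgroup of a minimal non-supersoluble group, forces every line of $N$ to be normal and hence collapses the order of $N$ to $p$ — contradicting $n\ge2$; everything else is bookkeeping with Lemmas~\ref{l333} and~\ref{l444}.
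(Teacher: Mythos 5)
Your parts (1) and (2) coincide with the paper's proof: the same modular-law computation $H=(H\cap G')N$ handles the quotient closure, and the subgroup closure is the same one-line observation. Part (3) is where you diverge. The paper argues by a short induction on $|G|$: if $G'\ne 1$, pick a subgroup $N\le G'$ of prime order; it is normal in $G$ by the siding hypothesis, $G/N$ is siding by part (1) and supersoluble by induction, and since $N$ is cyclic of prime order and normal, $G$ is supersoluble (the abelian case $G'=1$ being trivial). You instead take a minimal counterexample, prove solubility via the observation that $G'$ is Dedekind hence nilpotent, invoke Lemma~\ref{l333} (for the saturated formation $\mathfrak U$) and Lemma~\ref{l444} to get a unique minimal normal subgroup $N=F(G)$ of order $p^n$ with $n\ge 2$, show $N\le G^{\mathfrak U}\le G'$, and then contradict the minimality of $N$ by producing a normal subgroup of order $p$ inside it. Both arguments are correct; the paper's is shorter and avoids the primitive-group machinery entirely, while yours makes explicit the structural obstruction (the siding condition forces every line of the unique minimal normal subgroup to be normal) and, as a by-product, isolates the solubility of siding groups as a separate elementary fact rather than absorbing it into the induction.
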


\begin{proof}
1. By \cite[Lemma~I.8.3]{Hup}, $(G/N)^\prime=G^\prime N/N$. Let
$A/N$ be an arbitrary subgroup of $(G/N)^\prime$.
Then
$$
A\leq G^\prime N, \ A=A\cap G^\prime N=(A\cap G^\prime)N.
$$
Since  $A\cap G^\prime\leq G^\prime$, we have $A\cap G^\prime$ is normal in~$G$. Hence $(A\cap G^\prime)N/N$ is normal in~$G/N$.

2. Since $H\leq G$, it follows that $H^\prime\leq G^\prime$. Let  $A$ be an arbitrary subgroup of $H^\prime$. Then $A\leq G^\prime$ and $A$ is normal in $G$. Therefore $A$ is normal in  $H$.

3. We proceed by induction on the order of $G$. Let $N\leq G^\prime$ and
$|N|=p$, where $p$ is prime. By the hypothesis,  $N$ is normal in $G$.
By induction, $G/N$ is supersoluble and $G$ is supersoluble.
\end{proof}

\begin{lemma} \emph{(\cite[Lemma~ 3]{Mon_Kn_Res_2013})} \label{l4}
Let $H$~be a subgroup of $G$, and $N$~be a normal subgroup of~$G$.
Then the following hold:

$(1)$ if $N\le H$ and $H/N$ $\mathbb  Psn$ $G/N$, then $H$ $\mathbb  Psn$ $G$;

$(2)$ if $H$ $\mathbb  Psn$ $G$, then $(H\cap N)$ $\mathbb  Psn$ $N$,
$HN/N$ $\mathbb  Psn$ $G/N$ and $HN$ $\mathbb  Psn$~$G$;

$(3)$ if $H\le K\le G$, $H$ $\mathbb  Psn$ $K$ and  $K$ $\mathbb  Psn$ $G$, then
$H$ $\mathbb  Psn$~$G$;

$(4)$ if $H$ $\mathbb  Psn$ $G$, then $H^g$ $\mathbb  Psn$ $G$ for any $g\in G$.
\end{lemma}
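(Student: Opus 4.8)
The plan is to derive all four assertions directly from the definition of $\mathbb P$-subnormality, reducing everything to the behaviour of a single prime-index step $H_{i-1}\le H_i$ under intersection with, and passage modulo, a normal subgroup. Parts $(3)$ and $(4)$ require no real work. For $(3)$ I would concatenate the given chains: if $H=H_0\le\dots\le H_m=K$ and $K=K_0\le\dots\le K_n=G$ both have prime indices, the joined chain from $H$ to $G$ has prime indices, so $H\ \mathbb Psn\ G$. For $(4)$, conjugating a chain $H=H_0\le\dots\le H_n=G$ by $g$ yields $H^g=H_0^g\le\dots\le H_n^g=G$ with $|H_i^g:H_{i-1}^g|=|H_i:H_{i-1}|\in\mathbb P$. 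Part $(1)$ follows from the correspondence theorem: a chain $H/N=L_0/N\le\dots\le L_k/N=G/N$ with prime indices lifts to $H=L_0\le\dots\le L_k=G$ with $|L_i:L_{i-1}|=|L_i/N:L_{i-1}/N|\in\mathbb P$.

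The substance is part $(2)$, and the engine is the following observation about one step. Fix $H_{i-1}\le H_i$ with $|H_i:H_{i-1}|=p$ prime, and let $N\trianglelefteq G$. Since $N$ is normal, $H_i\cap N\trianglelefteq H_i$, so $H_{i-1}(H_i\cap N)$ is a subgroup lying between $H_{i-1}$ and $H_i$; as $p$ is prime it equals $H_{i-1}$ or $H_i$. In the first case $H_i\cap N\le H_{i-1}$, whence $H_i\cap N=H_{i-1}\cap N$; in the second, the order formula $|H_{i-1}(H_i\cap N)|=|H_{i-1}|\,|H_i\cap N|/|H_{i-1}\cap N|$ applied to $H_{i-1}(H_i\cap N)=H_i$ gives $|H_i\cap N:H_{i-1}\cap N|=|H_i:H_{i-1}|=p$. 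Thus $|H_i\cap N:H_{i-1}\cap N|\in\{1,p\}$. For the first statement of $(2)$ I apply this to each step of a chain $H=H_0\le\dots\le H_n=G$, obtaining $H\cap N=H_0\cap N\le\dots\le H_n\cap N=N$ with each index $1$ or prime; deleting the repeated terms leaves a genuine prime-index chain, so $(H\cap N)\ \mathbb Psn\ N$.

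For the quotient statement I argue dually. With the same step, $H_iN=H_i(H_{i-1}N)$ is a subgroup (as $N\trianglelefteq G$), so the order formula gives $|H_iN:H_{i-1}N|=|H_i:H_i\cap H_{i-1}N|$, and since $H_{i-1}\le H_i\cap H_{i-1}N\le H_i$ this index divides $p$, hence is $1$ or $p$. Applying this along the chain gives $HN/N=H_0N/N\le\dots\le H_nN/N=G/N$ with prime or trivial indices; after pruning repeats, $HN/N\ \mathbb Psn\ G/N$. Finally $HN\ \mathbb Psn\ G$ follows from this together with part $(1)$, since $N\le HN$ and $HN/N\ \mathbb Psn\ G/N$.

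I expect the only care needed to be bookkeeping rather than any genuine difficulty: one must check that $H_{i-1}(H_i\cap N)$ and $H_{i-1}N$ are indeed subgroups, which uses the normality of $N$ in $G$ decisively; that each induced index divides the prime $p$ and is therefore $1$ or $p$, which uses primality; and that the resulting chains, possibly carrying repetitions where the index collapses to $1$, are trimmed to strictly prime-index chains as the definition of $\mathbb P$-subnormality demands.
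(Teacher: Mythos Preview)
Your argument is correct. The paper does not prove this lemma at all: it is quoted from \cite[Lemma~3]{Mon_Kn_Res_2013} and simply cited, so there is no in-paper proof to compare against. Your treatment of $(1)$, $(3)$, $(4)$ is the standard one-line verification, and your handling of $(2)$ via the dichotomy $|H_i\cap N:H_{i-1}\cap N|\in\{1,p\}$ and $|H_iN:H_{i-1}N|\in\{1,p\}$ is exactly the expected elementary argument; the product-formula step $|H_iN:H_{i-1}N|=|H_i:H_i\cap H_{i-1}N|$ is justified because $H_iN=H_i(H_{i-1}N)$ is a group (as $N\trianglelefteq G$), and the final deduction of $HN\ \mathbb Psn\ G$ from $(1)$ is clean. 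Nothing is missing.
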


\begin{lemma} \emph{(\cite[Lemma~4]{Mon_Kn_Res_2013})} \label{l4'}  Let $G$ be a soluble group, and $H$~be a subgroup of~$G$.
Then the following hold:

$(1)$ if $H$  $\mathbb Psn$ $G$ and $K\leq G$, then $(H\cap K)$
$\mathbb Psn$ $K$;

$(2)$ if $H_i$ $\mathbb Psn$ $G$, $i=1,2$, then $(H_1\cap H_2)$ $\mathbb Psn$ $G$.
\end{lemma}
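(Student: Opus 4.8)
The plan is to prove statement (1) by induction on $|G|$, and then to derive (2) immediately from (1) together with the transitivity of $\mathbb P$-subnormality (Lemma~\ref{l4}(3)): applying (1) with $H=H_1$ and $K=H_2$ gives $(H_1\cap H_2)\ \mathbb Psn\ H_2$, and since $H_2\ \mathbb Psn\ G$ we conclude $(H_1\cap H_2)\ \mathbb Psn\ G$.

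For (1), if $H=G$ then $H\cap K=K$ and there is nothing to prove, so I would assume $H<G$ and fix a chain $H=H_0\le H_1\le\ldots\le H_n=G$ witnessing $H\ \mathbb Psn\ G$, and set $M:=H_{n-1}$. Then $p:=|G:M|$ is prime, $|M|<|G|$, and the subchain $H_0\le\ldots\le H_{n-1}$ shows $H\ \mathbb Psn\ M$. The key elementary observation is that the map $k(K\cap M)\mapsto kM$ embeds the set of left cosets of $K\cap M$ in $K$ into the set of left cosets of $M$ in $G$, whence $|K:K\cap M|\le|G:M|=p$, so $|K:K\cap M|\in\{1,p\}$. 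If $K\le M$, then $H$ and $K$ both lie in the smaller group $M$ with $H\ \mathbb Psn\ M$, and the induction hypothesis applied inside $M$ yields $H\cap K\ \mathbb Psn\ K$. If instead $|K:K\cap M|=p$, then $K\cap M\ \mathbb Psn\ K$ via the one-step chain; moreover $H$ and $K\cap M$ both lie in $M$ with $H\ \mathbb Psn\ M$, so the induction hypothesis inside $M$ gives $H\cap K=H\cap(K\cap M)\ \mathbb Psn\ K\cap M$ (using $H\le M$), and transitivity (Lemma~\ref{l4}(3)) produces $H\cap K\ \mathbb Psn\ K$. This closes the induction.

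I do not expect a genuine obstacle. The two points that need care are: choosing to induct on $|G|$ rather than attempting to manufacture a $\mathbb P$-subnormal chain for $H\cap K$ directly from the given chain for $H$ (the indices $|H_i\cap K:H_{i-1}\cap K|$ are only bounded by $|H_i:H_{i-1}|$ and need not be prime, so the naive approach stalls); and scrupulously recording, at each invocation of transitivity, which ambient group the relation $\mathbb Psn$ refers to. I would also remark that the solubility of $G$ is not actually used anywhere in this argument; it is kept only to match the statement of the reference.
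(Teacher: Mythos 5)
The paper itself gives no proof of this lemma (it is quoted from \cite[Lemma~4]{Mon_Kn_Res_2013}), so I can only assess your argument on its own terms. Your derivation of (2) from (1) via Lemma~\ref{l4}\,(3) is correct and standard. The proof of (1), however, has a genuine gap: from $|K:K\cap M|\le |G:M|=p$ you conclude $|K:K\cap M|\in\{1,p\}$, and this dichotomy is false. The index $|K:K\cap M|$ is the length of the $K$-orbit of the point $M$ in the degree-$p$ action of $G$ on the cosets of $M$, and that length can be any integer between $1$ and $p$. Concretely, in the soluble group $G=\mathrm{AGL}(1,7)=Z_7\rtimes Z_6$ let $M$ and $K$ be two distinct Frobenius complements of order $6$; then $|G:M|=7$ is prime while $K\cap M=1$ and $|K:K\cap M|=6$. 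Your induction therefore has no case covering $1<|K:K\cap M|<p$, and in that case one must still show that $K\cap M$ is $\mathbb P$-subnormal in $K$; this is not a one-step chain and is essentially the whole difficulty of the lemma.

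That the gap is fatal rather than cosmetic is shown by your own closing remark that solubility is never used: if the argument were complete it would prove part (1) for all finite groups, and that statement is false. In $G=A_5$ the point stabilizers $H\simeq A_4$ and $K\simeq A_4$ of two different points are both $\mathbb P$-subnormal (index $5$), yet $H\cap K\simeq Z_3$ is maximal of index $4$ in $K$ and hence not $\mathbb P$-subnormal in $K$. So any correct proof must invoke solubility somewhere. One way to repair your induction at the missing case: since $|G:M|=p$, the subgroup $M$ is maximal; passing to the core $M_G$, the quotient $G/M_G$ is a soluble primitive group whose unique minimal normal subgroup is complemented by $M/M_G$ and has order $p$ (Lemma~\ref{l444}), so $G/M_G$ is isomorphic to $Z_p\rtimes Z_d$ with $d$ dividing $p-1$ and is supersoluble. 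Hence $K/(K\cap M_G)\simeq KM_G/M_G$ is supersoluble, so by Huppert's theorem $(K\cap M)/(K\cap M_G)$ is $\mathbb P$-subnormal in $K/(K\cap M_G)$, and Lemma~\ref{l4}\,(1) lifts this to $(K\cap M)$ $\mathbb Psn$ $K$. With that supplement, which is exactly where solubility enters, the remainder of your induction goes through.
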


\begin{lemma} \emph{(\cite[Lemma~5] {Mon_Kn_Res_2013})} \label{l66}
If $H$~is a subnormal subgroup of a soluble group~$G$, then $H$ is $\mathbb P$-subnormal in~$G$.
\end{lemma}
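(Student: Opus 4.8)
The plan is to construct, directly from the subnormal series of $H$, a chain of subgroups of the form~(1) joining $H$ to $G$. First I would use the definition of subnormality to fix a chain $H=H_0\le H_1\le\ldots\le H_n=G$ in which $H_{i-1}$ is normal in $H_i$ for every $i$. Since $G$ is soluble and each $H_i$ is a subgroup of $G$, every $H_i$ is soluble, and hence so is each quotient $H_i/H_{i-1}$.

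The key point is that a finite soluble group has a composition series all of whose factors are cyclic of prime order. Applying this to $H_i/H_{i-1}$ gives a chain $1=\overline{T}_{i,0}\le\overline{T}_{i,1}\le\ldots\le\overline{T}_{i,m_i}=H_i/H_{i-1}$ with each index in $\mathbb P$; pulling it back through the natural epimorphism $H_i\to H_i/H_{i-1}$ via the correspondence theorem produces subgroups $H_{i-1}=T_{i,0}\le T_{i,1}\le\ldots\le T_{i,m_i}=H_i$ with $|T_{i,j}:T_{i,j-1}|\in\mathbb P$. Concatenating these segments over $i=1,\ldots,n$ yields a single chain from $H$ up to $G$ with all indices prime, which is exactly a chain of the form~(1); thus $H$ is $\mathbb P$-subnormal in $G$. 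An equivalent way to package the last step is to note that the construction already shows each $H_{i-1}$ is $\mathbb P$-subnormal in $H_i$, and then to invoke the transitivity of $\mathbb P$-subnormality (Lemma~\ref{l4}(3)) along the chain $H=H_0\le H_1\le\ldots\le H_n=G$.

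I do not expect any genuine obstacle here: the argument rests only on the structure of finite soluble groups (prime-order composition factors) together with the correspondence theorem, both entirely standard. The only point that needs a moment's care is verifying that the concatenated chain is genuinely of type~(1), i.e. that all consecutive indices are prime and the endpoints are exactly $H$ and $G$; but this is immediate from the way the segments are glued.
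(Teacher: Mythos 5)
Your argument is correct: refining each normal step $H_{i-1}\trianglelefteq H_i$ of the subnormal chain by a composition series of the soluble quotient $H_i/H_{i-1}$ (whose factors are necessarily of prime order) and pulling back via the correspondence theorem yields exactly a chain of type~(1). The paper itself gives no proof, citing Lemma~5 of Monakhov--Kniahina instead, and your argument is the standard one that establishes that cited result.
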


\begin{lemma}\label{l31} \emph{(\cite[Theorem~2.7]{VVTSMJ10})}
The class $\mathrm{w}\frak U$ is a hereditary saturated formation.
\end{lemma}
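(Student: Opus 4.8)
This statement bundles three facts: that $\mathrm{w}\mathfrak{U}$ is a formation (closed under quotients and under subdirect products), that it is subgroup-closed, and that it is saturated. I would treat them in the order: quotient-closure, closure under normal subgroups, solubility of the members, subgroup-closure, $R_0$-closure, saturation. Quotient-closure is immediate from Lemma~\ref{l4}(2), since every Sylow $p$-subgroup of $G/N$ has the form $PN/N$ with $P\in\mathrm{Syl}_p(G)$. Closure under normal subgroups follows from Lemma~\ref{l4}(2) and~(4): if $N\trianglelefteq G\in\mathrm{w}\mathfrak{U}$ and $Q\in\mathrm{Syl}_p(N)$, pick $P\in\mathrm{Syl}_p(G)$; then $P\cap N\in\mathrm{Syl}_p(N)$ is $\mathbb{P}$-subnormal in $N$, and $Q$, being conjugate to $P\cap N$ in $N$, is $\mathbb{P}$-subnormal in $N$. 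With these in hand, every $G\in\mathrm{w}\mathfrak{U}$ is soluble by a minimal-counterexample argument: a minimal insoluble member is simple (a proper nontrivial normal subgroup and its quotient both lie in $\mathrm{w}\mathfrak{U}$, hence are soluble by minimality) and non-abelian, but a $\mathbb{P}$-subnormal Sylow subgroup for the largest prime $r\mid|G|$ lies in a maximal subgroup of prime index $\ell$, necessarily $\ell<r$, whose core is trivial by simplicity, so $G\hookrightarrow S_\ell$ while $r\mid|G|$ and $r>\ell$, a contradiction. Solubility then yields subgroup-closure via Lemma~\ref{l4'}(1): for $H\le G\in\mathrm{w}\mathfrak{U}$ and $Q\in\mathrm{Syl}_p(H)$, extend $Q$ to $P\in\mathrm{Syl}_p(G)$; then $P\cap H=Q$ is $\mathbb{P}$-subnormal in $H$.

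For $R_0$-closure, let $N_1\cap N_2=1$ with $G/N_i\in\mathrm{w}\mathfrak{U}$; then $G$ is soluble, being embedded in $G/N_1\times G/N_2$. Fix $P\in\mathrm{Syl}_p(G)$. Since $PN_i/N_i\in\mathrm{Syl}_p(G/N_i)$ is $\mathbb{P}$-subnormal in $G/N_i$, Lemma~\ref{l4}(1) gives that $PN_i$ is $\mathbb{P}$-subnormal in $G$, and then Lemma~\ref{l4'}(2) gives that $PN_1\cap PN_2$ is $\mathbb{P}$-subnormal in $G$. It remains to see that $PN_1\cap PN_2=P$. By Dedekind's modular law it equals $P(N_1\cap PN_2)$, and every element of $N_1\cap PN_2$ is a $p$-element: working modulo $N_2$, its image lies in the intersection of the Sylow $p$-subgroup $PN_2/N_2$ with the normal subgroup $N_1N_2/N_2$, hence has $p$-power order, and the map $N_1\to N_1N_2/N_2$ is injective. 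Thus $N_1\cap PN_2$ is a $p$-group, so $PN_1\cap PN_2$ is a $p$-group containing the Sylow subgroup $P$, whence it equals $P$. Therefore $P$ is $\mathbb{P}$-subnormal in $G$, and $G\in\mathrm{w}\mathfrak{U}$.

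The saturation is the real issue, and I would settle it by exhibiting $\mathrm{w}\mathfrak{U}$ as a local formation. For a prime $q$ put
$$
f(q)=\bigcap_{p\ne q}\{X\in\mathfrak{E} : \text{a Sylow }p\text{-subgroup of }X\text{ is abelian of exponent dividing }q-1\}.
$$
Each $f(q)$ is a formation, so the local formation $\mathfrak{F}$ defined by $f$ is saturated by the Gasch\"utz--Lubeseder--Schmid theorem, and $\mathfrak{F}$ consists of soluble groups (no non-abelian simple group lies in $f(q)$ for all primes $q$ dividing its order). I claim $\mathrm{w}\mathfrak{U}=\mathfrak{F}$, proved by induction on $|G|$. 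Both classes are quotient-closed and soluble, so let $N$ be a minimal normal subgroup of $G$, an elementary abelian $q$-group. The crux is the local reduction: $G\in\mathrm{w}\mathfrak{U}$ if and only if $G/N\in\mathrm{w}\mathfrak{U}$ and every Sylow $p$-subgroup with $p\ne q$ acts diagonally over $\mathbb{F}_q$ on $N$ (i.e. $N$ is a direct sum of $1$-dimensional invariant subspaces for it). Forward: if $P\in\mathrm{Syl}_p(G)$ with $p\ne q$ is $\mathbb{P}$-subnormal in $G$, it is $\mathbb{P}$-subnormal in $NP$ by Lemma~\ref{l4'}(1); since $|NP:P|$ is a power of $q$, every such chain produces, through Dedekind's law, a $P$-invariant series of $N$ all of whose factors have order $q$, and complete reducibility ($p\ne q$) then makes $P$ act diagonally. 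Conversely, acting diagonally gives directly that $P$ is $\mathbb{P}$-subnormal in $NP$, while $G/N\in\mathrm{w}\mathfrak{U}$ gives (Lemma~\ref{l4}(1)) that $NP$ is $\mathbb{P}$-subnormal in $G$, so $P$ is $\mathbb{P}$-subnormal in $G$ by Lemma~\ref{l4}(3); and for $p=q$ one has $N\le P$, so the $\mathbb{P}$-subnormality of $P/N$ in $G/N$ already lifts by Lemma~\ref{l4}(1). Finally, ``$P$ acts diagonally over $\mathbb{F}_q$ on $N$'' is equivalent to a Sylow $p$-subgroup of $G/C_G(N)$ being abelian of exponent dividing $q-1$, so the condition in the local reduction is exactly $G/C_G(N)\in f(q)$; comparing with the defining recursion $G\in\mathfrak{F}\iff(G/N\in\mathfrak{F}\text{ and }G/C_G(N)\in f(q))$ and applying the induction hypothesis to $G/N$ completes the proof. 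I expect the main obstacle to be precisely the forward direction of this local reduction --- showing that a $\mathbb{P}$-subnormal Sylow subgroup is \emph{compelled} to stabilize a complete flag of $\mathbb{F}_q$-subspaces of a chief factor, hence (in the coprime case) to act diagonally. Everything else is bookkeeping with the $\mathbb{P}$-subnormality lemmas, but that flag-stabilization property is what locks $\mathrm{w}\mathfrak{U}$ onto the local class $\mathfrak{F}$ and thereby delivers saturation.
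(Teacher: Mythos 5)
The paper itself offers no proof of this lemma: it is imported by citation from \cite[Theorem~2.7]{VVTSMJ10}, so there is no in-paper argument to measure yours against, and your blind proof has to be judged on its own. As far as I can check it is correct and self-contained modulo the Gasch\"utz--Lubeseder--Schmid theorem. The formation, hereditary and solubility parts are routine uses of Lemmas~\ref{l4} and~\ref{l4'}; in the solubility step you should make explicit why $\ell<r$, namely that the penultimate term of the $\mathbb P$-subnormal chain already contains a full Sylow $r$-subgroup, so $r\nmid\ell$ and hence $r\nmid\ell!$. The real content is your local screen $f(q)$ and the local reduction, and both are right: Dedekind's law converts a prime-index chain $P=H_0\le\cdots\le H_n=NP$ into the $P$-invariant flag $(H_i\cap N)_i$ with factors of order $q$, Maschke upgrades the flag to a diagonal decomposition, and the converse plus the $p=q$ case follow from Lemma~\ref{l4}\,(1),(3); the translation of ``diagonal action'' into ``$G/C_G(N)\in f(q)$'' via $PC_G(N)/C_G(N)\simeq P/C_P(N)$ is also correct. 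Two points you leave implicit and should write out: (i) the recursion $G\in LF(f)$ iff $G/N\in LF(f)$ and $G/C_G(N)\in f(q)$ requires the Jordan--H\"older bookkeeping that chief factors above $N$ are centralized by $N$ and that $G$-isomorphic chief factors have equal centralizers; (ii) the solubility of $LF(f)$ rests on the observation that a nonabelian simple section with two distinct prime divisors $q_1,q_2$ would force $q_1<q_2<q_1$. Neither is a gap, only compression. In short: where the paper delegates the statement entirely to the cited source, you supply an actual proof, and the local-formation route you take is the natural (and presumably the original) way to obtain saturation.
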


\begin{lemma}\label{l32}
$(1)$ If $G\in \mathrm{w} \mathfrak U$,
then~$G^\mathcal{A}$ is nilpotent,~\emph{\cite[Theorem~2.13]{VVTSMJ10}}.

$(2)$  $G\in \mathrm{w} \mathfrak U$ if and only if every  metanilpotent subgroup of $G$ is supersoluble,~\emph{\cite[Theorem~2.6]{Mon2016SMJ}}.

$(3)$  $G\in \mathrm{w} \mathfrak U$ if and only if $G$ has a Sylow tower of supersoluble type and every
biprimary subgroup of $G$  is supersoluble,~\emph{\cite[Theorem~B]{Mon_Kn_Res_2013}}.
\end{lemma}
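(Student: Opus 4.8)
The plan is to organize all three assertions around a single technical device, which I will call the \emph{flag lemma}: if $Q$ is a $\mathbb{P}$-subnormal Sylow $q$-subgroup of a soluble group $X$ and $N$ is a normal $p$-subgroup of $X$, then $Q$ is $\mathbb{P}$-subnormal in $NQ$ by Lemma~\ref{l4'}(1), and intersecting a chain~(1) from $Q$ to $NQ$ with $N$ produces a $Q$-invariant series of $N$ all of whose factors have order $p$. When $q\neq p$, Maschke's theorem then forces $Q$ to act diagonally, so that $Q/C_Q(N)$ is abelian and every irreducible $Q$-submodule of $N$ is one-dimensional. This is exactly supersolubility in the biprimary case, and it is the engine behind each part.

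I would prove assertion~(3) first, as it is the most self-contained. For the converse, suppose $G$ has a Sylow tower of supersoluble type and all biprimary subgroups supersoluble, and let $p$ be the largest prime divisor with $P=O_p(G)$ the corresponding normal Sylow subgroup. Arguing by induction on $|G|$, the quotient $G/P$ is $\mathrm{w}$-supersoluble, so any Sylow subgroup $R$ of $G$ has $RP/P$ $\mathbb{P}$-subnormal in $G/P$; lifting by Lemma~\ref{l4}(1) gives $RP$ $\mathbb{P}$-subnormal in $G$. Since $RP$ is biprimary it is supersoluble, so $R$ is $\mathbb{P}$-subnormal in $RP$ by Huppert's Theorem, and transitivity (Lemma~\ref{l4}(3)) yields $R$ $\mathbb{P}$-subnormal in $G$; the top Sylow $P$ is normal, hence $\mathbb{P}$-subnormal by Lemma~\ref{l66}. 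For the direct implication, $\mathbb{P}$-subnormality of the Sylow subgroup for the largest prime $p$ forces, via a chain~(1) together with a count of Sylow $p$-subgroups (all intermediate indices being primes $<p$), a normal Sylow $p$-subgroup, and induction on $G/P$ then gives the Sylow tower; that every biprimary subgroup is supersoluble is the flag lemma, since in $H=P\rtimes Q$ a bottom $H$-chief factor inside $P$ is $Q$-irreducible, hence one-dimensional, and one induces down.

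Assertion~(2) I would deduce from~(3). For the forward direction a metanilpotent $\mathrm{w}$-supersoluble $H$ is reduced, by taking a minimal normal subgroup $W\leq F(H)$, to showing $W$ cyclic; writing $\overline H=H/C_H(W)$, which is nilpotent and acts faithfully and irreducibly on $W$, the flag lemma applied to each Sylow subgroup of $\overline H$ of order prime to $p$ makes these act diagonally, hence, being commuting diagonalizable operators, simultaneously diagonally, so they act by scalars on the irreducible $W$; the $p$-part of $\overline H$ then acts irreducibly, forcing $\dim W=1$, after which one induces. For the converse one first shows every biprimary subgroup is supersoluble: a minimal non-supersoluble biprimary counterexample is primitive by Lemma~\ref{l333}, hence of the form $F\rtimes M$ with $F=O_r(H)$ and $M$ an $s$-group by Lemma~\ref{l444}, so it is metanilpotent, contradicting the hypothesis; one then builds the Sylow tower, whence~(3) applies.

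Assertion~(1) is the deepest, and I expect it to be the main obstacle. I would take a minimal counterexample $G\in\mathrm{w}\mathfrak U$ with $R:=G^{\mathcal A}$ non-nilpotent; since $\mathrm{w}\mathfrak U$ is a hereditary saturated formation (Lemma~\ref{l31}) and $(G/K)^{\mathcal A}=RK/K$ (Lemma~\ref{l3}(1)), a two-minimal-normal-subgroup argument gives a unique minimal normal subgroup $N$, a $p$-group with $N\leq R$ and $R/N$ nilpotent. A reduction to the primitive case then gives $N=F(G)=O_p(G)=C_G(N)$ and $G=N\rtimes M$ (Lemmas~\ref{l333},\ref{l444}). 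Here $R$ is $p$-closed with $O_p(R)\le O_p(G)=N$, so the Sylow $p$-subgroup of $R$ is exactly $N$ and $R/N\cong M^{\mathcal A}$ is a $p'$-group. By the flag lemma every Sylow $q$-subgroup of $M$ with $q\neq p$ acts faithfully and diagonally on $N$, hence is abelian; and since $M^{\mathcal A}$ is a $p'$-group, the condition $M/M^{\mathcal A}\in\mathcal A$ forces the Sylow $p$-subgroup of $M$ to be abelian as well. Thus $M\in\mathcal A$, i.e. $R=N$ is nilpotent, the desired contradiction. The hard part is precisely the passage to this primitive, self-centralizing configuration: because $\mathfrak N\mathcal A$ is not a saturated formation, one cannot reduce to $\Phi(G)=1$ and $O_p(G)=N$ by the naive saturation argument, and this control must be secured by the careful formation-theoretic bookkeeping of~\cite{VVTSMJ10}; once one is in that configuration, the observation that $R/N$ is a $p'$-group, which is what makes $\mathcal A$ rather than $\mathfrak U$ appear, is the decisive and slightly surprising step.
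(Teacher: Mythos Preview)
The paper gives no proof of this lemma at all: each of the three assertions is simply quoted, with a bare citation to \cite{VVTSMJ10}, \cite{Mon2016SMJ}, and \cite{Mon_Kn_Res_2013} respectively. Your proposal therefore goes far beyond the paper, which treats the lemma as an imported black box; there is no ``paper's own proof'' to compare against.

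That said, your sketch is essentially sound, and in one place you undersell yourself. In part~(1) you worry that the passage to the primitive configuration $N=F(G)=O_p(G)=C_G(N)$ is obstructed because $\mathfrak{N}\mathcal{A}$ is not saturated, and you defer this step to \cite{VVTSMJ10}. In fact the reduction goes through directly. You already have a unique minimal normal subgroup $N$ and $R/N$ nilpotent with $R=G^{\mathcal A}$. If $\Phi(G)\neq 1$ then $N\le \Phi(G)$, so $R\Phi(G)/\Phi(G)$, being a quotient of $R/N$, is a nilpotent normal subgroup of $G/\Phi(G)$; hence $R\Phi(G)/\Phi(G)\le F(G/\Phi(G))=F(G)/\Phi(G)$, giving $R\le F(G)$ and $R$ nilpotent, a contradiction. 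Thus $\Phi(G)=1$, and since $G$ is soluble with a unique minimal normal subgroup this forces $F(G)=N$ and $C_G(N)=N$ as in Lemma~\ref{l444}. Your flag-lemma argument then finishes (1) exactly as you outlined: every Sylow $q$-subgroup of the complement $M$ with $q\neq p$ embeds in a diagonal torus and is abelian, while the Sylow $p$-subgroup of $M$ is abelian because $R/N=M^{\mathcal A}$ is a $p'$-group, so $M\in\mathcal A$ and $R=N$.

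For parts~(2) and~(3) your outline is correct; the only places that deserve a line more of care are the supersolubility of a biprimary $H=P\rtimes Q$ (one must observe that $P$ acts trivially on each $H$-chief factor inside $P$, so these are irreducible $Q$-modules and hence one-dimensional by your flag lemma), and the construction of the Sylow tower in the converse of~(2), where Burnside's $p^aq^b$-theorem guarantees solubility of biprimary subgroups and a short induction on $|G|$ gives normality of the top Sylow subgroup.
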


\section{Factorizable groups with $\Bbb P$-subnormal \\ $\mathrm{w}$-supersoluble subgroups}\label{sec2}

\medskip

\begin{lemma}\label{l6} \emph{(\cite[Theorem 4.4]{Vas_Tiut_SMZ_2012})} Let $A$ and $B$ be $\mathbb  P$\nobreakdash-\hspace{0pt}subnormal subgroups of~$G$, and $G=AB$. If $A$ and $B$ have an ordered Sylow tower of supersoluble type, then $G$ has an ordered Sylow tower of supersoluble type.
\end{lemma}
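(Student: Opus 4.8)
The plan is to argue by induction on $|G|$. Put $p:=\max\pi(G)$ and let $\mathfrak V$ be the class of all finite groups possessing an ordered Sylow tower of supersoluble type; recall that $\mathfrak V$ is a subgroup-closed saturated formation and that a group $H$ lies in $\mathfrak V$ if and only if $H$ is $p$-closed with $p=\max\pi(H)$ and $H/O_p(H)\in\mathfrak V$. The first step is a reduction: it suffices to prove that \emph{$G$ has a normal Sylow $p$-subgroup}. Indeed, for every nontrivial $N\trianglelefteq G$ the quotient $G/N=(AN/N)(BN/N)$ again satisfies the hypotheses, the factors being $\mathbb P$-subnormal in $G/N$ by Lemma~\ref{l4}(2) and lying in $\mathfrak V$ as epimorphic images of $A$ and $B$; hence, once $G$ is $p$-closed, the induction hypothesis applied to $G/O_p(G)$ gives $G/O_p(G)\in\mathfrak V$, and prepending $O_p(G)$ to a Sylow tower of $G/O_p(G)$ yields one for $G$ (so that $G$ is in particular soluble).

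To produce the normal Sylow $p$-subgroup I would first set things up. Whichever of $A,B$ has order divisible by $p$ has $p$ as its largest prime divisor and so, being in $\mathfrak V$, is $p$-closed; write $A_p:=O_p(A)$ and $B_p:=O_p(B)$ for the corresponding unique, normal Sylow $p$-subgroups, with the convention $A_p=1$ if $p\nmid|A|$ and similarly for $B$. By the theory of factorized groups~\cite{Bal_Rom_As_2010}, $G$ has a Sylow $p$-subgroup $P$ with $P\cap A\in\mathrm{Syl}_p(A)$ and $P\cap B\in\mathrm{Syl}_p(B)$; since $A$ and $B$ are $p$-closed this forces $P\cap A=A_p$, $P\cap B=B_p$, and comparing orders gives $P=A_pB_p$. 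Now suppose $p\nmid[G:A]$ (so that $A_p\in\mathrm{Syl}_p(G)$; the case $p\nmid[G:B]$ is symmetric) and fix a chain $A=H_0<H_1<\dots<H_n=G$ with every index $q_i:=|H_i:H_{i-1}|$ prime. Since $|A|_p=|G|_p$, the $p$-part is constant along the chain, so $A_p$ is a Sylow $p$-subgroup of each $H_i$ and each $q_i\neq p$. Then $A_p\trianglelefteq H_i$ for all $i$, by induction on $i$: the case $i=0$ is the $p$-closedness of $A$, and if $A_p\trianglelefteq H_{i-1}$ then $H_{i-1}\le N_{H_i}(A_p)$, so the number of Sylow $p$-subgroups of $H_i$ divides $q_i$ and hence equals $1$ or $q_i$; it cannot equal $q_i$, for that would give $q_i\equiv 1\pmod{p}$, impossible since $q_i$ is a prime dividing $|H_i|$, hence $q_i\le\max\pi(H_i)\le p$, while $q_i\neq p$. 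Thus $A_p\trianglelefteq H_n=G$.

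There remains the case $p\mid[G:A]$ and $p\mid[G:B]$, in which $p$ divides both $|A|$ and $|B|$ and $A_p,B_p$ are proper subgroups of $P=A_pB_p$. If $O_p(G)\neq 1$, I would pick a minimal normal subgroup $L\le O_p(G)$; then $L$ is a $p$-group, $L\le P$, the quotient $G/L$ satisfies the hypotheses, and by the induction hypothesis it is $p$-closed, so $P/L$ is its normal Sylow $p$-subgroup and $P\trianglelefteq G$. Consequently everything comes down to excluding the case $O_p(G)=1$. Here, since every proper quotient of $G$ belongs to $\mathfrak V$, Lemma~\ref{l333} shows that a minimal counterexample is a primitive group; and if one also knows $G$ to be soluble, Lemma~\ref{l444} presents it as $O_r(G)\rtimes M$ with socle the elementary abelian $r$-group $O_r(G)$ and $O_r(M)=1$, while the argument of the previous paragraph applied to the prime $p$ forces $r<p$.

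The crux --- and the step I expect to be the main obstacle --- is to derive a contradiction in this configuration from the $\mathbb P$-subnormality of $A$ and $B$, and simultaneously to pin down the solubility of $G$ used above. The natural line of attack is to analyse the chains $A=H_0<\dots<H_n=G$ and $B=K_0<\dots<K_m=G$ of prime indices: in a primitive group each maximal subgroup either contains the socle (in which case one works modulo the socle, where the induction hypothesis applies) or is a core-free primitivator of prime index, which drastically restricts the socle; feeding this into the factorization $G=AB$ together with the relations $P=A_pB_p$, $A_p=O_p(A)\trianglelefteq A$, $B_p=O_p(B)\trianglelefteq B$ --- and using Lemma~\ref{l4}(4) to conjugate the factors into convenient position --- one aims to force $N_G(P)=G$, contradicting $O_p(G)=1$. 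I anticipate the bookkeeping required to control how the two chains interact with the socle and with the factors $A$ and $B$ to be the genuinely delicate part.
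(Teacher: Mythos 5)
The paper itself does not prove this lemma; it quotes it from \cite[Theorem~4.4]{Vas_Tiut_SMZ_2012}, so your attempt has to stand on its own. The parts you actually carry out are correct: the reduction to showing that $G$ is $p$-closed for $p=\max\pi(G)$ (using that the class of groups with an ordered Sylow tower of supersoluble type is a subgroup-closed saturated formation and passing to $G/O_p(G)$), and the case $p\nmid |G:A|$, where you push normality of $A_p$ up a chain of prime indices by counting Sylow $p$-subgroups, are sound and complete. The same goes for the observation that $O_p(G)\neq 1$ finishes the argument via induction applied to $G/L$.

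The genuine gap is the one you name yourself: the case where $p$ divides both $|G:A|$ and $|G:B|$ and $O_p(G)=1$ is never resolved. Two things are missing there. First, you invoke Lemma~\ref{l444}, which requires $G$ to be soluble, but solubility of $G=AB$ is not established anywhere in your argument and is not free: a product of two soluble subgroups need not be soluble (e.g.\ $A_5=A_4D_{10}$, where $A_4$ even has prime index), so solubility here genuinely depends on the full hypotheses and needs its own proof. Second, even granting solubility and the primitive structure $G=O_r(G)\rtimes M$ with $r<p$, your plan to ``force $N_G(P)=G$'' by bookkeeping the two chains against the socle is only a declaration of intent, and this is exactly the hard point of the theorem. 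A standard way to close this case is a separate lemma asserting that a $\mathbb P$-subnormal $p$-subgroup of $G$, for $p$ the largest prime in $\pi(G)$, is contained in $O_p(G)$; since $A_p\trianglelefteq A$ and $B_p\trianglelefteq B$ are $\mathbb P$-subnormal in $G$ by Lemmas~\ref{l66} and~\ref{l4}\,(3), that lemma yields $P=A_pB_p\le O_p(G)$ immediately, contradicting $O_p(G)=1$. Without that ingredient (which itself requires a nontrivial induction along the chain) or a worked-out substitute, the proof is incomplete precisely where the real difficulty lies.
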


{\bf Proof of Theorem~1\,(1).}
If~$G$ is  $\mathrm{w}$-supersoluble, then~$G^{\mathrm{w}\frak U}=1$ and $G^\mathcal{A}$ is nilpotent by Lemma~\ref{l32}\,(1).  Consequently
$G^{\mathrm{w}\frak U}=1=(G^\mathcal{A})^\frak N$
and the statement is true. Further, we assume that~$G$ is non-$\mathrm{w}$-supersoluble.
Since $\mathrm{w}\frak U\subseteq \mathfrak N \mathcal{A}$,
it follows that
$$
G^{(\mathfrak N \mathcal{A})}=(G^{\mathcal{A}})^\mathfrak N\le
G^{\mathrm{w}\frak U}
$$
by Lemma~\ref{l3}\,(2-3). Next we check the converse inclusion.
For this we prove that~$G/(G^\mathcal{A})^\frak N$ is
$\mathrm{w}$-supersoluble. By Lemma~\ref{l3}\,(1),
$$
(G/(G^\mathcal{A})^\frak N)^\mathcal{A} =
G^\mathcal{A} (G^\mathcal{A})^\frak N/(G^\mathcal{A})^\frak N =
G^\mathcal{A}/(G^\mathcal{A})^\frak N
$$
and  $(G/(G^\mathcal{A})^\frak N)^\mathcal{A}$
is nilpotent. The quotients
$$
G/(G^\mathcal{A})^\frak N=(A(G^\mathcal{A})^\frak N/(G^\mathcal{A})^\frak N)(B(G^\mathcal{A})^\frak N/(G^\mathcal{A})^\frak N,
$$
$$
A(G^\mathcal{A})^\frak N/(G^\mathcal{A})^\frak N\simeq A/A\cap (G^\mathcal{A})^\frak N,
$$$$ B(G^\mathcal{A})^\frak N/(G^\mathcal{A})^\frak N\simeq B/B\cap (G^\mathcal{A})^\frak N,
$$
hence the subgroups  $A(G^\mathcal{A})^\frak N/(G^\mathcal{A})^\frak N$ and
$B(G^\mathcal{A})^\frak N/(G^\mathcal{A})^\frak N$ are $\mathrm{w}$-supersoluble by Lemma~\ref{l31} and by Lemma~\ref{l4}\,(2),  they are  $\Bbb P$-subnormal in~$G/(G^\mathcal{A})^\frak N$.
By Theorem~A,  $G/(G^\mathcal{A})^\frak N$ is $\mathrm{w}$-supersoluble. \hfill $\square$
%\end{proof}

\begin{lemma} \label{t3} Let $G$ be a group, and  $A$ be a subgroup of $G$ such that ${|G:A|=p^\alpha}$, where $p\in \pi(G)$ and
$\alpha\in \mathbb{N}$. Suppose that $A$ is $\mathrm{w}$-supersoluble and  $\Bbb P$\nobreakdash-\hspace{0pt}subnormal in~$G$.
If $G$ is $p$-closed, then $G$ is $\mathrm{w}$-supersoluble.
\end{lemma}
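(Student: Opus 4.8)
The plan is to induct on $|G|$ and exploit the $p$-closedness together with the $\mathbb P$-subnormality of $A$. Write $P=O_p(G)$, the normal Sylow $p$-subgroup of $G$. Since $|G:A|=p^\alpha$, the index of $A$ is a $p$-power, so $A$ contains a Hall $p'$-subgroup of $G$, and $AP=G$ (because $G/P$ is a $p'$-group and the image of $A$ in $G/P$ has $p$-power index, forcing $AP=G$). First I would handle the case $P\cap A=1$: then $|P|=|G:A|=p^\alpha$ and $G=P\rtimes A$ with $A\cong G/P$ $\mathrm w$-supersoluble. Here the key point is that $A$ is $\mathbb P$-subnormal in $G$, so there is a chain $A=H_0\le\ldots\le H_n=G$ with each index in $\mathbb P$; intersecting this chain with $P$ and using $|G:A|=p^\alpha$, one sees that each $|H_i:H_{i-1}|=p$, and this forces $P$ to be built up by a chain of $A$-invariant subgroups each of prime index, which together with the $\mathbb P$-subnormality and $p$-closedness should give that every Sylow subgroup of $G$ is $\mathbb P$-subnormal, i.e. $G\in\mathrm w\mathfrak U$.

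For the general case I would reduce to the above by quotients. If $1\ne N\trianglelefteq G$ with $N\le P$, then $G/N$ is $p$-closed, $AN/N$ is $\mathrm w$-supersoluble (Lemma \ref{l31}) and $\mathbb P$-subnormal in $G/N$ (Lemma \ref{l4}(2)), and $|G/N:AN/N|$ divides $p^\alpha$; if this index is a proper power, induction gives $G/N\in\mathrm w\mathfrak U$, and if $AN=G$ then $N(A\cap N)=N$... more usefully, $A\cap P\ne 1$ would let us take $N$ a minimal normal subgroup of $G$ inside $A\cap P$ (note $A\cap P\trianglelefteq A$ and, being contained in the abelian-by-nilpotent... at least nilpotent $P$, one can choose $N$ normal in $G$ since $P$ is normal and we may pick a chief factor of $G$ inside $P\cap A$ only if $P\cap A$ contains one — this needs care). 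The cleaner route: let $N$ be a minimal normal subgroup of $G$; since $G$ is $p$-closed and soluble (it is, being a product of the $p$-group $P$ and the soluble group $A$, as $\mathrm w$-supersoluble groups are soluble), $N\le P$. By induction $G/N\in\mathrm w\mathfrak U$. If $G$ has two distinct minimal normal subgroups, $G$ embeds in a product of two $\mathrm w$-supersoluble groups and we are done by Lemma \ref{l31}. Otherwise $N$ is the unique minimal normal subgroup; if moreover $N\le\Phi(G)$ then $G/\Phi(G)\in\mathrm w\mathfrak U$ forces $G\in\mathrm w\mathfrak U$ by Lemma \ref{l31} (saturation). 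So we may assume $N\not\le\Phi(G)$, $C_G(N)=N$ (by Lemma \ref{l444} applied to a suitable primitive quotient, via Lemma \ref{l333}), hence $N=P=O_p(G)$ is elementary abelian and $G=P\rtimes M$ with $A\cap P=1$ — which returns us to the split case above with $M=A$.

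The main obstacle I expect is the split case $G=P\rtimes A$ with $P$ elementary abelian: one must show that the $\mathbb P$-subnormality of the complement $A$ together with $|P|=p^\alpha$ actually forces $G$ to be $\mathrm w$-supersoluble, i.e. that $P$ admits an $A$-invariant chain with factors of order $p$. This amounts to showing that the chain of subgroups $A=H_0\le H_1\le\cdots\le H_n=G$ (prime indices) can be refined/modified to a chain $A=H_0\le H_1'\le\cdots\le H_n'=G$ in which each $H_i'=\langle A,P_i\rangle=P_i\rtimes A$ for an ascending chain $1=P_0<P_1<\cdots<P_n=P$ of $A$-invariant subgroups of $P$ with $|P_i:P_{i-1}|=p$; indeed $H_i\cap P$ is such a chain up to the steps where the index is $p'$, but since $|G:A|=p^\alpha$ all indices must be $p$, so $H_i\cap P$ does the job and $H_i=(H_i\cap P)\rtimes A$. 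Then each $P_i$ is normal in $G$ (being $A$-invariant and inside the abelian normal $P$), so $P$ itself is $\mathbb P$-subnormal in $G$; any other Sylow subgroup is conjugate to one in $A$, hence $\mathbb P$-subnormal in $A$ and, as $A$ is $\mathbb P$-subnormal in $G$, $\mathbb P$-subnormal in $G$ by Lemma \ref{l4}(3) and (4). Therefore every Sylow subgroup of $G$ is $\mathbb P$-subnormal in $G$, i.e. $G\in\mathrm w\mathfrak U$, completing the induction.
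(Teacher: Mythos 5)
Your argument does reach the conclusion, but it takes a long detour where the paper goes straight through, and one of your justifications is a non sequitur. The paper's proof is four lines: since $P$ is a normal Sylow $p$-subgroup and $|G:A|=p^\alpha$, we get $G=AP$ and $G/P\simeq A/A\cap P\in\mathrm{w}\mathfrak U$, so $G$ is soluble; then $P$, being normal in a soluble group, is $\Bbb P$-subnormal by Lemma~\ref{l66}; and any Sylow $q$-subgroup with $q\ne p$ lies in some conjugate $A^x$, is $\Bbb P$-subnormal there because $A^x\in\mathrm{w}\mathfrak U$, and is $\Bbb P$-subnormal in $G$ by Lemma~\ref{l4}\,(3)--(4). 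You do use exactly these two closing steps, but only after an induction, a reduction to a primitive group, and a refinement of the $\Bbb P$-subnormal chain of $A$ into an $A$-invariant chain $1=P_0<\dots<P_\alpha=P$ --- all of which is unnecessary, since no structural information about $P$ beyond its normality is ever needed for $\mathrm{w}$-supersolubility.

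Two specific points to repair if you keep your route. First, ``each $P_i$ is normal in $G$, so $P$ itself is $\Bbb P$-subnormal in $G$'' is not a valid inference: your chain $P_0<\dots<P_\alpha=P$ ascends \emph{up to} $P$, whereas $\Bbb P$-subnormality of $P$ requires a chain \emph{from} $P$ up to $G$; the correct (and much shorter) reason is Lemma~\ref{l66} applied to the normal subgroup $P$ of the soluble group $G$. Second, in the primitive case you assert $A\cap P=1$ without proof; it does hold, but only after observing that $A\cap P$ is normalized by $A$ (as $P\trianglelefteq G$) and by the abelian group $P$, hence is normal in $AP=G$, so minimality of $N=P$ forces $A\cap P\in\{1,P\}$ and $A\cap P=P$ would give $A=G$. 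With those two repairs your proof is correct, but the direct argument makes the entire inductive scaffolding superfluous.
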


\begin{proof}
Let $P$~be a Sylow $p$-subgroup of~$G$. Since $P$ is normal in~$G$ and $G=AP$, we have $G/P\simeq A/A\cap P\in \mathrm{w}\frak U$, in particular,
$G$ is soluble.
Because $G$ is soluble, it follows that $P$ is $\Bbb P$\nobreakdash-\hspace{0pt}subnormal in~$G$ by Lemma~\ref{l66}.
Let $Q$ be a Sylow $q$-subgroup of~$G$, $q\neq p$. Then $Q\leq A^x$ for some $x\in G$. By Lemma~\ref{l4}\,(4), $A^x$ is
$\Bbb P$\nobreakdash-\hspace{0pt}subnormal in~$G$. Since $A^x\in \mathrm{w}\frak U$, it follows that $Q$ is
$\Bbb P$\nobreakdash-\hspace{0pt}subnormal in~$A^x$ and $Q$ is $\Bbb P$\nobreakdash-\hspace{0pt}subnormal in~$G$ by Lemma~\ref{l4}\,(3). So,  $G$ is $\mathrm{w}$-supersoluble.
\end{proof}

\begin{lemma} \label{cc3_2} Let $A$ and $B$~be $\mathrm{w}$\nobreakdash-\hspace{0pt}supersoluble $\Bbb P$-subnormal subgroups of~$G$, and $G=AB$.
Suppose that $|G:A|=p^\alpha$, where $p\in \pi(G)$.
If $p$~is the greatest in~$\pi(G)$, then $G$ is $\mathrm{w}$\nobreakdash-\hspace{0pt}supersoluble.
\end{lemma}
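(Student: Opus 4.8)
The plan is to reduce the statement to Lemma~\ref{t3} by proving that $G$ is $p$-closed. First I would record that every $\mathrm{w}$-supersoluble group possesses an ordered Sylow tower of supersoluble type; this is contained in Lemma~\ref{l32}(3). In particular both factors $A$ and $B$ have such a tower. Since, in addition, $A$ and $B$ are $\mathbb{P}$-subnormal in $G$ and $G=AB$, Lemma~\ref{l6} applies and yields that $G$ itself has an ordered Sylow tower of supersoluble type. This is the only place where the factorization $G=AB$ together with the $\mathbb{P}$-subnormality of \emph{both} factors is genuinely needed.

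Next I would bring in the hypothesis that $p$ is the greatest prime in $\pi(G)$. By the definition of an ordered Sylow tower of supersoluble type, its bottom term is a normal subgroup of $G$ isomorphic to a Sylow subgroup of $G$ for the largest prime dividing $|G|$; hence the Sylow $p$-subgroup $P$ of $G$ is normal in $G$, that is, $G$ is $p$-closed.

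Finally, with $G$ $p$-closed, $A$ $\mathrm{w}$-supersoluble and $\mathbb{P}$-subnormal in $G$, and $|G:A|=p^{\alpha}$, Lemma~\ref{t3} gives at once that $G$ is $\mathrm{w}$-supersoluble, which is the desired conclusion. (If $\alpha=0$ then $A=G$ and the claim is trivial, so one may assume $A$ proper.)

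I do not anticipate a real obstacle: the argument is essentially the concatenation of Lemma~\ref{l6} (Sylow tower for $G$) with the elementary remark that the ``greatest prime'' hypothesis promotes ``ordered Sylow tower'' to ``$p$-closed'', after which Lemma~\ref{t3} finishes the proof. The only point meriting a line of care is checking that the Sylow tower provided by $\mathrm{w}$-supersolubility in Lemma~\ref{l32}(3) is the \emph{ordered} one required as input to Lemma~\ref{l6}; both refer to the same arrangement of the primes in decreasing order, so this is immediate.
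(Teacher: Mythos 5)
Your proposal is correct and follows exactly the paper's own argument: invoke Lemma~\ref{l6} to get an ordered Sylow tower of supersoluble type for $G$, observe that since $p$ is the greatest prime in $\pi(G)$ this makes $G$ $p$-closed, and then apply Lemma~\ref{t3}. The extra care you take in citing Lemma~\ref{l32}(3) for the Sylow tower of the factors is a reasonable addition, though the paper treats that fact as known.
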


\begin{proof}
Since every $\mathrm{w}$\nobreakdash-\hspace{0pt}supersoluble group has an ordered Sylow tower of supersoluble type, then by Lemma~\ref{l6}, $G$ has an ordered Sylow tower of supersoluble type. Hence $G$ is $p$-closed. By Lemma~\ref{t3}, we have that $G$ is $\mathrm{w}$\nobreakdash-\hspace{0pt}supersoluble.
\end{proof}

\begin{theorem} \label{th3_3}
Let $A$ be a $\mathrm{w}$-supersoluble $\Bbb P$-subnormal subgroup of~$G$,
and~$G=AB$.  Then~$G$ is $\mathrm{w}$-supersoluble  in each of the following cases:

$(1)$  $B$ is nilpotent and normal in~$G$;

$(2)$ $B$ is nilpotent and~$|G:B|$~is prime;

$(3)$ $B$ is normal in~$G$ and is a siding group.
\end{theorem}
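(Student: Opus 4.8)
The plan is to argue by induction on $|G|$, taking $G$ to be a counterexample of least order to whichever of $(1)$, $(2)$, $(3)$ is under consideration. Two facts hold in all three cases. First, $G$ is soluble: in $(1)$ and $(3)$ the factor $B$ is soluble (nilpotent, resp.\ supersoluble by Lemma~\ref{l55}) and $G/B\cong A/(A\cap B)$ is $\mathrm{w}$-supersoluble by Lemma~\ref{l31}; in $(2)$ the factor $B$ is $\mathbb P$-subnormal in $G$ (being maximal of prime index) and, since $A$ and $B$ both have ordered Sylow towers of supersoluble type, so does $G$ by Lemma~\ref{l6}. Second, in every case $B$ is $\mathbb P$-subnormal in $G$ (a normal subgroup of a soluble group, by Lemma~\ref{l66}, or maximal of prime index) and $\mathrm{w}$-supersoluble; hence $A$ and $B$ satisfy the hypotheses of Theorem~1, so $G^{\mathrm{w}\frak U}=(G^{\mathcal A})^{\frak N}$, and it suffices to prove that $G$ is $\mathrm{w}$-supersoluble, equivalently that $G^{\mathcal A}$ is nilpotent.

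I would then reduce to the case that $G$ is primitive. For a minimal normal subgroup $N$ of $G$ one checks that $G/N=(AN/N)(BN/N)$ again satisfies the hypotheses of the case at hand: $AN/N\cong A/(A\cap N)$ is $\mathrm{w}$-supersoluble and $\mathbb P$-subnormal in $G/N$ by Lemmas~\ref{l31} and~\ref{l4}; $BN/N$ is an epimorphic image of $B$, hence nilpotent in $(1)$--$(2)$ and siding in $(3)$ by Lemma~\ref{l55}, it is normal in $G/N$ in cases $(1)$ and $(3)$, and in $(2)$ the index $|G/N:BN/N|$ divides the prime $|G:B|$ while, if it equals $1$, $G/N\cong B/(B\cap N)$ is nilpotent. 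So $G/N\in\mathrm{w}\frak U$ by minimality (or because it is nilpotent), and hence $G/N\in\mathrm{w}\frak U$ for every non-trivial normal subgroup $N$ of $G$. Since $\mathrm{w}\frak U$ is a saturated formation (Lemma~\ref{l31}), Lemma~\ref{l333} gives that $G$ is primitive, and — being soluble — $G$ satisfies Lemma~\ref{l444}: $N:=F(G)=C_G(F(G))=O_p(G)$ is the unique minimal normal subgroup of $G$, elementary abelian of order $p^n$, with $\Phi(G)=1$ and $G=N\rtimes M$, $O_p(M)=1$.

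Since $A$ and $B$ have ordered Sylow towers of supersoluble type and are $\mathbb P$-subnormal, Lemma~\ref{l6} gives one for $G$; if $q$ is the largest prime in $\pi(G)$, the normal Sylow $q$-subgroup of $G$ is nilpotent and normal, hence lies in $F(G)=O_p(G)$, forcing $q=p$, so $F(G)$ is actually a normal Sylow $p$-subgroup of $G$ and $G$ is $p$-closed. Note $B\neq1$, else $G=A\in\mathrm{w}\frak U$. In case $(1)$, $B$ is nilpotent and normal, so $B\le F(G)=N$ and $B=N$ by minimality of $N$. In case $(2)$, if the core $B_G=1$ then $B$ is a primitivator, so $|N|=|G:B|$ is a prime and $G/N=G/C_G(N)\hookrightarrow\mathrm{Aut}(N)$ is cyclic, whence $G$ is metacyclic and supersoluble — a contradiction; hence $B_G\neq1$, so $N\le B$, and $B$ nilpotent with $N$ as its normal Sylow $p$-subgroup gives $B=N\times C$ with $C$ a $p'$-subgroup; since $B=N(B\cap M)$ and $N\cap M=1$ force $C=B\cap M$, we get $[N,B\cap M]=1$, so $B\cap M\le C_G(N)\cap M=1$ and again $B=N$. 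In both cases $G=AN$, so $|G:A|=|N:A\cap N|$ is a power of $p$; since $G$ is $p$-closed and $A$ is $\mathrm{w}$-supersoluble and $\mathbb P$-subnormal, Lemma~\ref{t3} gives $G\in\mathrm{w}\frak U$, a contradiction. This disposes of $(1)$ and $(2)$.

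Case $(3)$ is where I expect the real difficulty. Now $B\trianglelefteq G$ is siding but possibly not nilpotent; since $B$ is supersoluble one has $B'\le F(B)$, and $N\le F(B)$ while $F(B)\trianglelefteq G$ is nilpotent, so $F(B)=F(G)=N=C_B(F(B))=O_p(B)$. Thus $B=N\rtimes K$ with $K=B\cap M$ a $p'$-group acting faithfully on $N$, and the siding condition (together with supersolubility of $B$, which makes $N$ a direct sum of one-dimensional $K$-modules) forces the non-trivial $K$-characters occurring in $N$ to coincide, so $K$ is cyclic and acts on $[N,K]$ by scalars. If $K\le AN$ then $G=AN$ and the argument of $(1)$--$(2)$ (via Lemma~\ref{t3}) applies. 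Otherwise $AN<G$; here $AN=A\cdot N$ is $\mathrm{w}$-supersoluble by case~$(1)$ applied inductively (note $A$ is $\mathbb P$-subnormal in $AN$ by Lemma~\ref{l4'} and $N\trianglelefteq AN$ is nilpotent), and $AN$ (by Lemma~\ref{l4}) and $K$ (since $K\le B$, which is supersoluble and normal) are $\mathbb P$-subnormal $\mathrm{w}$-supersoluble subgroups with $G=(AN)K$; Lemma~\ref{l3}\,(4) (with $\frak F=\mathcal A$ and $K^{\mathcal A}=1$) then gives $G^{\mathcal A}\le AN$, and as $N$ is a normal Sylow $p$-subgroup of $AN$ we get that $G^{\mathcal A}\cap N$ is a normal Sylow $p$-subgroup of $G^{\mathcal A}$ with nilpotent $p'$-quotient $G^{\mathcal A}/(G^{\mathcal A}\cap N)\cong(G/N)^{\mathcal A}$. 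The hard step, which I expect to require the scalar action of $K$ on $N$ together with the $\mathbb P$-subnormality of $A$ in $G$, is to show that a Hall $p'$-subgroup of $G^{\mathcal A}$ centralises $G^{\mathcal A}\cap N$, i.e.\ that $G^{\mathcal A}$ is nilpotent — contradicting the choice of $G$ and completing the proof.
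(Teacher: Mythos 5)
Your treatment of cases (1) and (2) is essentially sound: after the standard reduction to a soluble primitive group with $F(G)=N=O_p(G)=C_G(N)$ and $p$ the largest prime, you correctly force $B=N$ and finish with Lemma~\ref{t3} (this is in effect the paper's Lemma~\ref{cc3_2}; the paper itself argues case~(2) slightly differently, via the prime-index overgroup of $A$, but your route works). The problem is case (3), where you explicitly leave the decisive step unproved: ``the hard step \dots is to show that a Hall $p'$-subgroup of $G^{\mathcal A}$ centralises $G^{\mathcal A}\cap N$.'' That is not a technical loose end --- it is the entire content of the case, and nothing in your setup (the scalar action of $K$ on $[N,K]$, the factorisation $G=(AN)K$, the computation of $G^{\mathcal A}$) obviously delivers it. As written, the proof of (3) is incomplete. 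A smaller issue: you assert that the siding condition makes $K$ itself cyclic, but it only gives that $K$ induces power automorphisms on $[N,K]$, i.e.\ that $K/C_K([N,K])$ is cyclic; faithfulness of $K$ on all of $N$ does not immediately transfer to $[N,K]$.

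The idea you are missing is the one the paper builds into all three cases from the start: since $A$ is $\mathbb P$-subnormal, there is a subgroup $M$ with $A\le M$ and $|G:M|$ prime; by Dedekind $M=A(M\cap B)$, and $M\cap B$ inherits the hypothesis on $B$, so $M$ is $\mathrm{w}$-supersoluble \emph{by induction}. In case (3) one then observes $B'\ne 1$ is normal in $G$ and nilpotent (as $B$ is supersoluble), hence $B'=N$. If $N\not\le M$ then $G=N\rtimes M$ with $|N|=|G:M|$ prime and $G$ is supersoluble. If $N\le M$, the $\mathrm{w}$-supersoluble group $M$ has a subgroup $N_1\le N$ of prime order normal in $M$; since $N_1\le N=B'$ and $B$ is siding, $N_1$ is also normal in $B$, hence normal in $G=MB$, contradicting the minimality of $N$ unless $|N|$ is prime. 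This short argument replaces your entire residual computation, and it is exactly where the siding hypothesis and the $\mathbb P$-subnormality of $A$ (through the overgroup $M$) interact.
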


\begin{proof}
We prove all three statements at the same time using induction on the order of $G$.
Note that $G$ is soluble in any case.
By Lemma~\ref{l66},  $B$ is $\Bbb P$-subnormal in~$G$ and~$G$ has an ordered Sylow tower of supersoluble type by Lemma~\ref{l6}.
If $N$~is a non-trivial normal subgroup of~$G$, then $AN/N$ is $\Bbb P$-subnormal in~$G/N$ by Lemma~\ref{l4}\,(2) and
$AN/N\simeq A/A\cap N$ is $\mbox {w}$-supersoluble by Lemma~\ref{l31}. The subgroup $BN/N\simeq B/B\cap N$ is nilpotent or a siding group by Lemma~\ref{l55}\,(1).
Hence $G/N=(AN/N)(BN/N)$ is $\mbox {w}$-supersoluble by induction.
Since the formation of all $\mbox {w}$-supersoluble groups is saturated by Lemma~\ref{l31}, we have $G$~is a primitive group by Lemma~\ref{l333}.
By Lemma~\ref{l444},    $F(G)=N=G_p$~is a unique minimal normal subgroup of~$G$ and $N=C_G(N)$, where $p$~is the greatest in~$\pi (G)$.

Since $A$ is $\Bbb P$-subnormal in~$G$, it follows that $G$ has a subgroup~$M$ such that $A\le M$ and $|G:M|$~is prime.
By Dedekind's identity, $M=A(M\cap B)$. The subgroup~$A$ is $\Bbb P$-subnormal in~$M$. The subgroup $M\cap B$ satisfies the requirements~(1)--(3). By induction,~$M$ is $\mbox {w}$-supersoluble.

1. If $B$ is nilpotent and normal in~$G$, then~$B=N$.
Hence $G=AN$ and  $A$ is a maximal subgroup of~$G$. Since $A$ is $\Bbb P$-subnormal in $G$, we have  $|G:A|=p=|N|$ and $G$ is supersoluble. Therefore $G$ is $\mbox {w}$-supersoluble.
So, in~(1), the theorem is proved.

2. Let $B$ be nilpotent and~$|G:B|=q$, where $q$~is prime.  Besides, let $|G:M|=r$, where $r$~is prime.
If $q\neq r$, then $(|G:M|,|G:B|)=1$. Since $G=MB$, $M$ and $B$~are $\Bbb P$-subnormal in~$G$ and $\mbox {w}$-supersoluble,
it follows obviously that $G$ is $\mbox {w}$-supersoluble. Hence $q=r$. If $q=p$, then $N$ is not contained in $M$. Thus $G=N\rtimes M$
and $|N|$~is prime.  Consequently $G$ is supersoluble and therefore $G$ is $\mbox {w}$-supersoluble. So, $q\neq p$. Then $G_p=N\leq M\cap B$. Since $B$ is nilpotent, $G_p=B\leq M$. Because $G=MB$, we have $G=M$, a contradiction. So, in~(2), the theorem is proved.

3. Let $B$ is normal in~$G$ and is a siding group.
If~$B$ is nilpotent, then~$G$ is $\mbox {w}$-supersoluble by~(1). Hence $B^{\prime}\ne 1$. Because $B^{\prime}$ is normal in~$G$ and nilpotent, we have
$N=B^{\prime}$. If $N$ is not contained in~$M$, then $G=N\rtimes M$ and $|N|$~is prime.  Consequently $G$ is supersoluble and therefore $G$ is $\mbox {w}$-supersoluble. Let $N$ be contained in $M$
and $N_1$~be a subgroup of prime order of~$N$ such that $N_1$ is normal in~$M$.
Then $N_1$ is normal in~$B$ by definition of siding group.
Hence $N_1$ is normal in~$G$. Consequently $G$  is $\mbox {w}$-supersoluble. So, in~(3), the theorem is proved.
\end{proof}

\medskip

{\bf Proof of Theorem~1\,(2).}

Note that by the Lemma~\ref{l6}, $G$ is soluble. By Theorem~\ref{th3_3}\,(1), Theorem~1\,(2) is true. \hfill $\square$

\medskip

{\bf Proof of Theorem~1\,(3).}
Assume that the claim is false and let $G$~be a minimal counterexample. By Lemma~\ref{l6},  $G$ has an ordered Sylow tower of supersoluble type.
If $N$~is a non-trivial normal subgroup of~$G$, then $AN/N$ and $BN/N$ are $\Bbb P$-subnormal in~$G/N$ by Lemma~\ref{l4}\,(2). Besides,
$AN/N\simeq A/A\cap N$ and  $BN/N\simeq B/B\cap N$ are $\mbox {w}$-supersoluble by Lemma~\ref{l31}.
By Lemma~\ref{l3}, we have
$$
(|(AN/N)/(AN/N)^\mathcal{A}|, |(BN/N)/(BN/N)^\mathcal{A}|) =
$$
$$=(|AN/(AN)^\mathcal{A}N|, |BN/(BN)^\mathcal{A}N|)=
$$
$$
=(|AN/A^\mathcal{A}N|, |BN/B^\mathcal{A}N|)=
\big{(}\frac{|A/A^\mathcal{A}|}{|S_1|},
\frac{|B/B^\mathcal{A}|}{|S_2|}\big{)},
$$
$$
S_1=(A\cap N)/(A^\mathcal{A}\cap N), \ S_2=(B\cap N)/(B^\mathcal{A}\cap N).
$$
Since  $(|A/A^\mathcal{A}|, |B/B^\mathcal{A}|) = 1$, it follows that
$$
(|(AN/N)/(AN/N)^\mathcal{A}|, |(BN/N)/(BN/N)^\mathcal{A}|) =1.
$$
The quotient $G/N=(AN/N)(BN/N)$ is $\mbox {w}$-supersoluble by induction.

Since the formation of all $\mbox {w}$-supersoluble groups is saturated by Lemma~\ref{l31}, we have $G$~is a primitive group by Lemma~\ref{l333}.
By Lemma~\ref{l444},    $F(G)=N=G_p$~is a unique minimal normal subgroup of~$G$ and $N=C_G(N)$,  where $p$~is the greatest in~$\pi (G)$.

By Lemma~\ref{t3}, $AN$ is $\mbox {w}$-supersoluble.
If $AN=G$, then $G$ is $\mbox {w}$-supersoluble, a contradiction.
Hence in the future we consider that $AN$ and $BN$~are proper subgroups of~$G$.

By Lemma~\ref{l32}\,(1),  $(AN)^\mathcal{A}$ is nilpotent.
Since $N=C_G(N)$, we have $(AN)^\mathcal{A}$~is a $p$-group. Because $AN/(AN)^\mathcal{A}\in \mathcal{A}$, it follows that all Sylow $r$-subgroups of $A$ are abelian, $r\neq p$. Since $A_p\leq G_p$, where~$A_p$~is a Sylow $p$-subgroup of~$A$, we have $A\in \mathcal{A}$. Similarly, $B\in \mathcal{A}$.
Hence $A^\mathcal{A}=1=B^\mathcal{A}$ and
$(|A|,|B|)=(|A/A^\mathcal{A}|, |B/B^\mathcal{A}|) = 1$. It is clear that $G$ is $\mathrm{w}$-supersoluble, a contradiction.
\hfill $\square$

\section{Examples}\label{sec6}

\medskip

The following example shows that for a soluble group $G=AB$ the  mutually $sn$-permutability  of subgroups $A$ and $B$ doesn't follow from $\Bbb P$-subnormality of these factors.

\begin{example} \label{ex5} {\rm The group $G=S_3\rtimes Z_3$ (IdGroup=[18,3])  has $\Bbb P$-subnormal  subgroups $A\simeq E_{3^2}$ and  $B\simeq Z_2$. However $A$ and $B$ are not mutually $sn$-permutable.}
\end{example}

The following example shows that we cannot omit the condition <<$G$ is $p$-closed>> in Lemma~\ref{t3}.

\begin{example} {\rm The group ${G=(S_3\times S_3)\rtimes Z_2}$ (IdGroup=[72,40]) has a $\Bbb P$-subnormal supersoluble subgroups $A\simeq Z_3\times S_3$. Besides ${|G:A|=2^2}$ and  Sylow 2-subgroup is maximal in $G$. Hence $G$ is non-$\mathrm{w}$-supersoluble.}
\end{example}

The following example shows that in Theorem~\ref{th3_3}\,(1) the normality of subgroup $B$ cannot be weakened to $\Bbb P$-subnormality.

\begin{example} \label{ex3_3} {\rm
The group $G=(Z_2 \times (E_{3^2}\rtimes Z_4))\rtimes Z_2$ (IdGroup=[144,115]) is non-$\mbox {w}$-supersoluble and  factorized by subgroups $A=D_{12}$ and $B=Z_{12}$. The subgroup $A$ has the chain of subgroups
$A<S_3\times S_3<Z_2\times  S_3 \times  S_3<G$ and $B$ has the  chain of subgroups
$B<Z_3 \times (Z_3\rtimes Z_4)<(Z_3 \times  (Z_3\rtimes Z_4))\rtimes Z_2<G$. Therefore $A$ and $B$ are $\Bbb P$-subnormal in~$G$.
}
\end{example}

The following example shows that in Theorem~\ref{th3_3}\,(2) it is impossible to weak the restrictions on the index of subgroup $B$.

\begin{example} {\rm
The alternating group $G=A_4$ is non-$\mbox {w}$-supersoluble and  factorized by subgroups $A=E_{2^2}$ and $B=Z_{3}$. It is clear that  $A$ is supersoluble and $\Bbb P$-subnormal in~$G$, and $B$ is nilpotent and $|G:B|=2^2$. The group $G=E_{5^2}\rtimes Z_3$  is non-$\mbox {w}$-supersoluble and has a nilpotent subgroup $Z_3$ of index~$5^2$. Therefore even for the greatest $p$ of $\pi(G)$, the index of $B$ cannot be equal~$p^\alpha$, $\alpha\geq 2$.}
\end{example}

The following example shows that in Theorem~\ref{th3_3}\,(3) the normality of subgroup $B$ cannot be weakened to subnormality.

\begin{example}
{\rm  The group $G=Z_3\times ((S_3\times S_3)\rtimes Z_2)$
(IdGroup=[216,157]) is non-$\mbox {w}$-supersoluble and  factorized by $\Bbb P$-subnormal supersoluble subgroup $A\simeq S_3\times S_3$
and  subnormal siding subgroup $B\simeq Z_3\times Z_3\times S_3$.}
\end{example}

\end{document}